\DeclareMathOperator{\id}{id}
\DeclareMathOperator{\Isom}{Isom}
\DeclareMathOperator{\comp}{comp}
\newcommand{\rkCB}{\mathrm{rk}_\mathrm{CB}}
\newtheorem{theorem}{Theorem}[section]
\newtheorem{lemma}[theorem]{Lemma}
\newtheorem{proposition}[theorem]{Proposition}
\newtheorem{claim}{Claim}
\theoremstyle{definition}
\newtheorem{definition}[theorem]{Definition}
\theoremstyle{remark}
\newtheorem{remark}[theorem]{Remark}
\let\OLDthebibliography\thebibliography
\renewcommand\thebibliography[1]{
  \OLDthebibliography{#1}
  \setlength{\parskip}{0pt}
  \setlength{\itemsep}{2pt plus 0.3ex}
}
\title{Uncountably many homogeneous real trees \\ with the same valence}
\author{Pénélope Azuelos}
\date{November 5, 2025}
\begin{document}

\maketitle

\begin{abstract}
For any cardinal $\kappa \geq 2$, there is a unique complete real tree whose points all have valence $\kappa$. In this note, we show that, when $\kappa \geq 3$, it is necessary to assume completeness. More precisely, we show that there exist uncountably many homogeneous incomplete real trees whose points all have valence $\kappa$.
\end{abstract}

\section{Introduction}

Real trees are geodesic metric spaces with the property that any pair of points is connected by a unique arc. One of the ways in which these spaces arise is as asymptotic cones of hyperbolic groups. In fact, the asymptotic cone of a non-elementary hyperbolic group is always isometric to the same real tree: the universal real tree with continuum valence \cite{DyubinaExplicit01}. The \textit{valence} of a real tree $T$ at a point $x$ is the cardinality of the set of connected components of $T - \{x\}$ (the \textit{directions at $x$}). It was proven in \cite{MayerUniversal92,NikielTopologies89} that, for any cardinal $\kappa \geq 2$, there is a unique complete real tree $T_\kappa$, called the universal real tree of valence $\kappa$, such that every point of $T_\kappa$ has valence $\kappa$. Explicit constructions of these universal real trees where given in \cite{DyubinaExplicit01}.

While incomplete real trees do not arise as asymptotic cones directly, they do come up in this context. The asymptotic cone of a relatively hyperbolic group is a universal tree-graded space whose pieces are asymptotic cones of peripheral subgroups \cite{OsinUNIVERSAL11,SistoTreegraded13}. This tree-graded space admits a canonical projection to a real tree \cite[Section~2.3]{DrutuGroups08}, which is homogeneous and, provided the parabolic subgroups are infinite, is incomplete. Furthermore, the asymptotic cones of mapping class groups and other hierarchically hyperbolic groups admit canonical median preserving bi-Lipschitz embeddings into $\ell^1$ products of real trees \cite{BehrstockMedian11,Casals-RuizReal24}. Some of the images of these asymptotic cones under the natural projections to the factors are homogeneous incomplete real trees. This class of real trees also arises in work of Chiswell--M\"{u}ller, who proved that any free non-transitive group action on a real tree can be embedded into a free transitive action on an incomplete real tree \cite{ChiswellEmbedding10}. Lastly, Berestovski\u{\i} \cite{BerestovskiuiUryson19} noticed that a space first defined by Urysohn \cite{UrysohnBeispiel27}, and  which turns out to be a homogeneous incomplete real tree, was independently studied in \cite{BerestovskiuiQuasicones89,PolterovichAsymptotic97}. 

We will show that, unlike in the complete setting, valence alone is insufficient to distinguish between homogeneous real trees. More precisely, we will show the following:

\begin{theorem}
    Let $\kappa \geq 3$ be a cardinal. There exists a family $\{T_\kappa^{[\alpha]} : 1 \leq \alpha < \omega_1\}$ of incomplete homogenous real trees with valence $\kappa$ and a fixed isometric embedding $\psi_{\alpha,\beta}: T_\kappa^{[\alpha]} \hookrightarrow T_\kappa^{[\beta]}$ for all $\alpha < \beta$ such that:
    \begin{enumerate}
        \item for any countable ordinals $\alpha, \beta$, if $\alpha \neq \beta$ then $T_\kappa^{[\alpha]}$ is not isometric to $T_\kappa^{[\beta]}$;
        \item if $\alpha < \beta < \gamma$, then $\psi_{\alpha,\gamma} = \psi_{\beta,\gamma} \circ \psi_{\alpha,\beta}$;
        \item the direct union $\cup_{\alpha < \omega_1} T_\alpha$, taken with respect to $\{\psi_{\alpha,\beta} : 1 \leq \alpha < \beta < \omega_1\}$, is isometric to the universal real tree with valence $\kappa$.
    \end{enumerate}
\end{theorem}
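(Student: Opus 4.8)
The plan is to build the trees $T_\kappa^{[\alpha]}$ by transfinite recursion on $\alpha < \omega_1$, starting from a simple "seed" tree and repeatedly applying a completion-like operation that adjoins just enough new points to restore homogeneity without making the tree complete. I would take $T_\kappa^{[1]}$ to be, say, the simplicial $\kappa$-regular tree with edges subdivided densely (or some explicit countable model of a homogeneous real tree of valence $\kappa$ — Urysohn's example gives one). At a successor step I would define $T_\kappa^{[\alpha+1]}$ from $T_\kappa^{[\alpha]}$ by attaching, at every point and in every "missing" configuration, a copy of a model branch, in the spirit of a Fraïssé-type one-step amalgamation; concretely one can realize this as a metric subspace of $T_\kappa$ (the complete universal tree) that strictly increases at each stage. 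At a limit ordinal $\lambda$ I would take $T_\kappa^{[\lambda]}$ to be the direct union $\bigcup_{\alpha<\lambda} T_\kappa^{[\alpha]}$ along the embeddings built so far; since an increasing union of real trees is a real tree and valence is preserved, this stays a homogeneous real tree of valence $\kappa$, and one checks it is still incomplete by a diagonal/cofinality argument (a carefully chosen Cauchy sequence whose limit is never added).

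The embeddings $\psi_{\alpha,\beta}$ come for free from the construction: at successors they are the inclusion maps, at limits the canonical maps into the union, and coherence (item 2) is then immediate, as is item 3 once we observe that $\bigcup_{\alpha<\omega_1} T_\kappa^{[\alpha]}$ is complete (any Cauchy sequence lies in some $T_\kappa^{[\alpha]}$ by the uncountable cofinality of $\omega_1$, hence its limit is adjoined by stage $\alpha+1$) and homogeneous of valence $\kappa$, so by the Mayer–Nikiel uniqueness theorem it is $T_\kappa$. The substantive content, and the main obstacle, is item 1: distinguishing the $T_\kappa^{[\alpha]}$ up to isometry. For this I would attach to each homogeneous incomplete real tree $T$ an ordinal-valued isometry invariant measuring "how far from complete" it is — a Cantor–Bendixson-type rank (the paper's notation $\rkCB$ strongly suggests this is the route). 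Roughly: let $\partial T$ be the set of ends/incomplete-directions or the metric completion's boundary $\cl(T)\setminus T$, equip it with a natural topology or a derived-set operation coming from the tree structure, and define $\rkCB(T)$ as the length of the resulting Cantor–Bendixson derivation. The construction should be arranged so that one recursion step raises this rank by exactly one, giving $\rkCB(T_\kappa^{[\alpha]}) = \alpha$ (or a strictly increasing function of $\alpha$), whence $\alpha\neq\beta$ forces non-isometry.

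The key steps, in order, are: (i) fix an explicit homogeneous incomplete model $T_\kappa^{[1]}$ of valence $\kappa$ and verify homogeneity and incompleteness; (ii) define the one-step "partial completion" functor on homogeneous real trees of valence $\kappa$, and prove it preserves these properties while strictly enlarging the tree; (iii) run the transfinite recursion, handling limits by direct unions, and verify the resulting tower satisfies items 2 and 3; (iv) define the rank invariant $\rkCB$ and prove it is an isometry invariant; (v) compute $\rkCB(T_\kappa^{[\alpha]})$ and show it is injective in $\alpha$, giving item 1. I expect step (iv)–(v) to be the crux: one must choose the derived-set operation on the "defect set" of a real tree so that it is simultaneously isometry-invariant, well-behaved under the direct unions used at limit stages (so the rank at a limit is the supremum of the earlier ranks, or that plus one), and sensitive enough that the one-step operation increments it. Getting the bookkeeping right so that $\kappa\geq 3$ (needed so there is genuine branching to carry the complexity, whereas $\kappa=2$ gives only lines) and so that no two stages accidentally collide is where the real work lies; the homogeneity at limit stages is a soft argument, and incompleteness throughout is a routine cofinality observation.
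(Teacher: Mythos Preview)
Your instinct that Cantor--Bendixson rank is the key is correct, but both where it is applied and how non-isometry is established differ from the paper, and your version of the distinguishing step has a concrete gap. The paper does not build the trees by transfinite recursion via a ``partial completion'' functor. It works inside the explicit Dyubina--Polterovich model of $T_\kappa$ (functions $f:(-\infty,\rho_f)\to C_\kappa$, piecewise constant from the right), assigns to each point $f$ the compact countable set $P_f\subset\mathbb{R}$ of its break points, sets $\comp(f)\coloneqq\rkCB(P_f)$, and defines $T_\kappa^{[\alpha]}\coloneqq\{f\in T_\kappa:\comp(f)\le\alpha\}$ all at once. Homogeneity is then verified by exhibiting explicit isometries of this model; there is no stage at which homogeneity is lost and must be ``restored''. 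Items 2 and 3 become trivial: the $\psi_{\alpha,\beta}$ are literal inclusions, and $\bigcup_\alpha T_\kappa^{[\alpha]}=T_\kappa$ because every $P_f$ has some countable rank.

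For item 1 the paper does \emph{not} define an isometry invariant of the whole tree, and your proposed one, $\rkCB(\cl(T)\setminus T)$, fails outright: the metric-completion boundary of each $T_\kappa^{[\alpha]}$ is perfect (given a boundary point $f$, use $\kappa\ge 3$ to branch off just below $\rho_f$ and replicate the tail to produce a distinct nearby boundary point), so its Cantor--Bendixson rank is $0$ for every $\alpha$. Instead the paper proves directly that any isometric embedding $\psi:T_\kappa^{[\alpha_1]}\hookrightarrow T_\kappa^{[\alpha_2]}$ with $\alpha_1<\alpha_2$ is non-surjective. After normalising so that $\psi$ is the identity on the base line $L_0$ (hence order-preserving), a transfinite induction shows that for each successor $\beta\le\alpha_1$ and each $x$ one can find $a\succ x$ as close to $x$ as one likes with $\comp(x,a)=\beta$ while $\comp(\psi(x),\psi(a))\le\beta$; iterating produces a Cauchy sequence in $T_\kappa^{[\alpha_1]}$ whose limit in $T_\kappa$ has complexity exactly $\alpha_1+1$ (so escapes $T_\kappa^{[\alpha_1]}$, giving incompleteness), while its $\psi$-image converges to a point of complexity $\le\alpha_1+1\le\alpha_2$ in $T_\kappa^{[\alpha_2]}$ that cannot lie in the range of $\psi$. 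The point you are missing is that $\comp$ is attached to points of the specific model and is not itself isometry-invariant, so the real work is controlling it along an \emph{arbitrary} embedding; this is done pointwise by the inductive claim, not via any global invariant of the tree.
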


\begin{proof}
	We will use the description of the universal real tree $T_\kappa$ from \cite{DyubinaExplicit01} to define a filtration $T_\kappa = \cup_{\alpha < \omega_1} T_\kappa^{[\alpha]}$ (Definition~\ref{def: tka}). In Lemma~\ref{lem: incomplete r-tree}, we prove that each $T_\kappa^{[\alpha]}$ is an $\mathbb{R}$-tree with valence $\kappa$ and, in Lemma~\ref{lem: 2-transitive}, we show that $T_\kappa^{[\alpha]}$ is homogeneous. Letting $\psi_{\alpha,\beta}$ be the inclusion maps, this proves Items 2 and 3. We prove Item 1 and show that the $T_\kappa^{[\alpha]}$'s are incomplete in Proposition~\ref{lem: no isometry}.
\end{proof}

\begin{remark}
    \begin{enumerate}
        \item The real tree discussed in \cite{BerestovskiuiUryson19} is $T_{2^{\aleph_0}}^{[1]}$.
        \item If one only considers real trees which admit a free transitive action, then valence does distinguish between real trees of finite valence \cite[Theorem~C]{AzuelosGuide25}. This is no longer true for continuum valence trees, and I do not know if it holds for intermediate cardinals.
    \end{enumerate}
\end{remark}

\section{Cantor--Bendixson rank}

We will make use of a topological invariant called the Cantor--Bendixson rank to distinguish between different real trees with the same valence. Let us recall its definition.

Let $Y$ be a Polish space (i.e. separable and completely metrisable). The Cantor--Bendixson theorem (see e.g. \cite{KechrisClassical95}) states that there is a unique decomposition of $Y$ as a disjoint union $\mathcal{K}(Y) \sqcup C$, where $\mathcal{K}(Y)$ is perfect (i.e. closed with no isolated points) and $C$ is countable. Any non-empty perfect Polish space contains a Cantor set, so the perfect subspace $\mathcal{K}(Y)$ is empty if and only if $Y$ is countable. 

The Cantor--Bendixson derivatives of $Y$ are defined by transfinite recursion. Set $Y^{(0)} \coloneqq Y$. If $\alpha$ is an ordinal for which $Y^{(\alpha)}$ is defined, then $Y^{(\alpha+1)}$ is the set of non-isolated points of $Y^{(\alpha)}$. If $\beta$ is a limit ordinal such that $Y^{(\alpha)}$ is defined for all $\alpha < \beta$, then $Y^{(\beta)} \coloneqq \cap_{\alpha < \beta} Y^{(\alpha)}$. The Cantor--Bendixson theorem implies that there is a countable ordinal $\alpha$ such that $Y^{(\alpha + 1)} = Y^{(\alpha)}$ (i.e. $Y^{(\alpha)} = \mathcal{K}(Y)$). The minimal such $\alpha$ is called the  \textit{Cantor--Bendixson rank} of $Y$ and is denoted $\rkCB(Y)$.

\begin{remark} \label{rem: CB-rank succesor}
    If $Y$ is a countable compact metrisable space then its Cantor--Bendixson rank must be a successor ordinal. 
\end{remark}

\section{Finding the real trees}

Let $\kappa \geq 3$ be a cardinal. We first recall the construction of the complete universal real tree with valence $\kappa$ from \cite{DyubinaExplicit01}:

\begin{definition}[Labels]
    Let $C_\kappa$ be a set such that $ |C_\kappa| = \kappa$ if $\kappa$ is infinite and $|C_\kappa| = \kappa - 1$ if $\kappa$ is finite. Fix an element $0 \in C_\kappa$. 
\end{definition}

\begin{definition}
    Let $T_\kappa$ be the set of functions $f: (-\infty, \rho_f) \rightarrow C_\kappa$, where $\rho_f \in \mathbb{R}$, such that:
    \begin{enumerate}
        \item There exists $s \leq \rho_f$ such that $f(t) = 0$ for all $t < s$.
        \item $f$ is \textit{piecewise constant from the right}: i.e. for all $t \in (-\infty, \rho_f)$, there exists $\varepsilon > 0$ such that $f|_{[t, t+\varepsilon]}$ is constant.
    \end{enumerate}
    Given $f \in T_\kappa$, let $\tau_f$ be the maximum of all $s \leq \rho_f$ such that $f(t) = 0$ for all $t < s$. 
\end{definition}

\begin{definition}[Partial order]
	Define an order $\preceq$ on $T_\kappa$ where $f \preceq g$ if $f = g|_{(-\infty, \rho_f)}$. We will write $f \prec g$ if in addition $f \neq g$.
	
	Given $f,g \in T_\kappa$, let $s \coloneqq \sup\{t \leq \min\{\rho_f, \rho_g\} : f|_{(-\infty,t)} = g|_{(-\infty, t)}\}$ and define $f \wedge g: (-\infty, s) \rightarrow C_\kappa$ by $f \wedge g(t) = f(t) = g(t)$ for all $t < s$.
\end{definition}

\begin{remark}
    The relation $\preceq$ is reflexive, transitive and antisymmetric. Moreover, if $f, g \in T_\kappa$, then $f \wedge g$ is the greatest lower bound for $\{f,g\}$. 
\end{remark}

\begin{definition}[Metric]
    Define $d:T_\kappa \times T_\kappa \rightarrow \mathbb{R}$ by 
    \[
        d(f,g) \coloneqq \rho_f + \rho_g - 2\rho_{f \wedge g}
    \]
    for all $f,g \in T_\kappa$.
\end{definition}

By \cite[Theorem~1.1.3]{DyubinaExplicit01} $(T_\kappa, d)$ is the universal $\mathbb{R}$-tree with valence $\kappa$.

\begin{definition}[Complexity] \label{def: tka}
    Given $f \in T_\kappa$, let 
    \[
        P_f \coloneqq \overline{\{t < \rho_f : f|_{[t-\varepsilon, t]} \text{ is not constant for any } \varepsilon > 0\}}.
    \]
    The \textit{complexity of $f$}, denoted $\comp(f)$, is the Cantor--Bendixson rank of $P_f$.

    For each countable ordinal $\alpha$, define
    \[
        T_\kappa^{[\alpha]} \coloneqq \{f \in T_\kappa : \comp(f) \leq \alpha\}. 
    \]
\end{definition}

\begin{remark} 
Given $f \in T_\kappa$, the fact that $f$ is piecewise constant from the right ensures that the set $P_f$ is well-ordered with respect to the usual order on $\mathbb{R}$. This implies that $P_f$ is countable, so its perfect kernel is empty. Moreover, $P_f$ is closed and contained in the interval $[\tau_f, \rho_f]$, so it is compact. Thus, by Remark~\ref{rem: CB-rank succesor}, the Cantor--Bendixson rank of $P_f$ is a successor.
\end{remark}

\begin{lemma} \label{lem: incomplete r-tree}
    If $\alpha \geq 1$ is a countable ordinal, then $(T_\kappa^{[\alpha]}, d)$ is an $\mathbb{R}$-tree with valence~$\kappa$.
\end{lemma}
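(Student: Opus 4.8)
The plan is to realise $T_\kappa^{[\alpha]}$ as a geodesically convex subspace of $T_\kappa$, transfer the $\mathbb R$-tree structure, and then count directions by comparison with $T_\kappa$. Recall first that, directly from the formula for $d$, the geodesic segment between $f,g\in T_\kappa$ is
\[
[f,g]=\{f|_{(-\infty,t)}:\rho_{f\wedge g}\le t\le\rho_f\}\cup\{g|_{(-\infty,t)}:\rho_{f\wedge g}\le t\le\rho_g\},
\]
so every point of $[f,g]$ is a restriction of $f$ or of $g$. Hence, to see that $T_\kappa^{[\alpha]}$ is geodesically convex, it suffices to check that it is closed under $\preceq$: if $h\preceq f$ then for every $s<\rho_h$ one has $h|_{[s-\varepsilon,s]}=f|_{[s-\varepsilon,s]}$ for small $\varepsilon$, so the set of ``bad points'' of $h$ is that of $f$ intersected with $(-\infty,\rho_h)$, whence $P_h$ is a subspace of $P_f$; since Cantor--Bendixson rank is monotone under subspaces, $\comp(h)\le\comp(f)\le\alpha$. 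As $T_\kappa^{[\alpha]}$ is nonempty (it contains every function that is constantly $0$), and a nonempty geodesically convex subspace of an $\mathbb R$-tree is again an $\mathbb R$-tree, this yields the first assertion.

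For the valence, fix $h\in T_\kappa^{[\alpha]}$ and set $\rho\coloneqq\rho_h$. Convexity gives, in the usual way, that the directions of $T_\kappa^{[\alpha]}$ at $h$ are in surjective correspondence with those directions of $T_\kappa$ at $h$ that meet $T_\kappa^{[\alpha]}$ (two points of $T_\kappa^{[\alpha]}\setminus\{h\}$ lying in a common direction of $T_\kappa$ are joined in $T_\kappa$ by a segment avoiding $h$, which then lies in $T_\kappa^{[\alpha]}$); in particular $T_\kappa^{[\alpha]}$ has valence at most $\kappa$ at $h$. For the reverse inequality I will produce $\kappa=1+|C_\kappa|$ points of $T_\kappa^{[\alpha]}\setminus\{h\}$ lying in pairwise distinct directions of $T_\kappa$: the restriction $h^-\coloneqq h|_{(-\infty,\rho-1)}$, and, for each $c\in C_\kappa$, the extension $h^{+c}$ defined on $(-\infty,\rho+1)$ by $h^{+c}|_{(-\infty,\rho)}=h$ and $h^{+c}\equiv c$ on $[\rho,\rho+1)$. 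A short computation with $\wedge$ gives $h^-\wedge h^{+c}=h^-$ and $h^{+c}\wedge h^{+c'}=h$ for $c\ne c'$, so $h$ lies on each segment $[h^-,h^{+c}]$ and $[h^{+c},h^{+c'}]$; thus these $\kappa$ points lie in distinct directions of $T_\kappa$, hence of $T_\kappa^{[\alpha]}$ once we know they belong to it. Together with the upper bound this gives valence exactly $\kappa$.

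It remains to check that $h^-$ and all the $h^{+c}$ lie in $T_\kappa^{[\alpha]}$. For $h^-$ this is closure under $\preceq$. For $h^{+c}$ --- and this is the only substantive point --- one computes $P_{h^{+c}}$ directly from the definition: appending a constant segment at $\rho$ creates no bad points in $(\rho,\rho+1)$, and makes $\rho$ a bad point only when $h$ fails to be eventually constantly $c$ as $t\uparrow\rho$; and since $\operatorname{bad}(h^{+c})\subseteq\operatorname{bad}(h)\cup\{\rho\}$ while $\operatorname{bad}(h^{+c})\cap(-\infty,\rho)=\operatorname{bad}(h)$, one gets $P_h\subseteq P_{h^{+c}}\subseteq P_h\cup\{\rho\}$. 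Thus $P_{h^{+c}}$ is either $P_h$, or $P_h\cup\{\rho\}$ with $\rho\notin P_h$, and in the latter case $\rho$ is isolated in $P_{h^{+c}}$ because $P_h$ is closed. Adjoining an isolated point raises Cantor--Bendixson rank to at most $\max\{1,\rkCB(P_h)\}$, so, \emph{using $\alpha\ge1$}, $\comp(h^{+c})\le\max\{1,\comp(h)\}\le\alpha$, as needed. (The hypothesis $\alpha\ge 1$ is essential: $T_\kappa^{[0]}$ is not closed under $h\mapsto h^{+c}$ when $c$ is not the terminal value of $h$, and in fact has valence $2$.) The main obstacle is precisely this computation of $P_{h^{+c}}$ together with the behaviour of $\rkCB$ under adjoining an isolated point; everything else is routine manipulation of the order $\preceq$ and its meet operation.
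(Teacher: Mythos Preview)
Your proof is correct and follows essentially the same approach as the paper's: both arguments establish that $T_\kappa^{[\alpha]}$ is downward closed under $\preceq$ (hence geodesically convex, hence an $\mathbb R$-tree), and both verify valence $\kappa$ by showing that each direction $D_x$ at $f$ contains the extension of $f$ by the constant $x$, using $P_g\subseteq P_f\cup\{\rho_f\}$. Your write-up is more explicit about the geodesic description, the Cantor--Bendixson bookkeeping for adjoining an isolated point, and the role of the hypothesis $\alpha\ge 1$, but the underlying strategy is the same.
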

\begin{proof}
    If $f,g \in T_\kappa$, $f \preceq g$ and $\comp(g) \leq \alpha$, then $\comp(f) \leq \alpha$. Thus $T_\kappa^{[\alpha]}$ is downward closed, which implies that it is connected and therefore an $\mathbb{R}$-tree. Let $f \in T_\kappa^{[\alpha]}$. One direction of $T_\kappa$ at $f$ consists of all elements $g \in T_\kappa$ such that $g \wedge f \prec f$. In particular, this direction contains all elements of the form $f|_{(-\infty,s)}$ for some $s < \rho_f$, so its intersection with $T_\kappa^{[\alpha]}$ is non-empty. The remaining directions of $T_\kappa$ at $f$ are of the form $D_x = \{g \in T_\kappa : f \prec g$ and $g(\rho_f) = x\}$ for some $x \in C_\kappa$. Define $g: (- \infty, \rho_f + 1) \rightarrow \infty$ by $g(t) = f(t)$ if $t < \rho_f$ and $g(t) = x$ otherwise. Then $P_g \subseteq P_f \cup \{\rho_f\}$, so $g \in T_\kappa^{[\alpha]} \cap D_x$. Therefore the valence of $T_\kappa^{[\alpha]}$ at $f$ is $\kappa$.
\end{proof}

\begin{definition}
    For each $\ell \in \mathbb{R}$, let $c_\ell: (-\infty,\ell) \rightarrow C_\kappa$ be the constant map on $0$. Let 
    \[
        L_0 \coloneqq T_\kappa^{[0]} = \{c_\ell : \ell \in \mathbb{R}\}
    \]
    and note that the function $L_0 \rightarrow \mathbb{R}$ which maps each $c_\ell$ to $\ell$ is an isometry. Let $G^\alpha_{L_0}$ be the stabiliser of $L_0$ in $\Isom(T_\kappa^{[\alpha]})$.
\end{definition}

\begin{lemma} \label{lem: 2-transitive}
    Fix a countable ordinal $\alpha \geq 1$. Given $a_1, a_2, b_1, b_2 \in T_\kappa^{[\alpha]}$ such that $d(a_1, a_2) = d(b_1, b_2)$, there is an isometry $\psi \in \Isom(T_\kappa^{[\alpha]})$ such that $\psi(a_1) = b_1$ and $\psi(a_2) = b_2$. In particular, $T_\kappa^{[\alpha]}$ is homogeneous.
\end{lemma}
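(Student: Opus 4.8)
The plan is to reduce everything to the standard line $L_0$: I will show that for every pair $a_1,a_2\in T_\kappa^{[\alpha]}$, writing $r=d(a_1,a_2)$, there is an isometry of $T_\kappa^{[\alpha]}$ carrying $(a_1,a_2)$ to $(c_0,c_r)\in L_0\times L_0$. The lemma then follows by composing such an isometry for $(a_1,a_2)$ with the inverse of one for $(b_1,b_2)$, and the case $a_1=a_2$ gives homogeneity. To do this I use three kinds of isometries of $T_\kappa^{[\alpha]}$: translations and a flip along $L_0$ (both in $G^\alpha_{L_0}$), and a family of ``straightening'' maps.

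The \emph{translations} $\phi_t$ ($t\in\mathbb{R}$) are given by $\phi_t(f)(s)=f(s-t)$ on $(-\infty,\rho_f+t)$; they merely translate $P_f$, so they preserve $\comp$ and restrict to $c_\ell\mapsto c_{\ell+t}$ on $L_0$. The \emph{flip} $\iota$ is given by $\iota(f)(s)=0$ for $s<-\tau_f$ and $\iota(f)(s)=f(s+2\tau_f)$ for $s\geq-\tau_f$, on $(-\infty,\rho_f-2\tau_f)$; a short computation shows it is an isometric involution of $T_\kappa$ fixing $c_0$ with $\iota(c_\ell)=c_{-\ell}$, and, since a non-constant $f$ first takes a non-zero value exactly at $\tau_f$, one has $P_{\iota(f)}\subseteq\{t-2\tau_f:t\in P_f\}$, whence $\comp(\iota(f))\leq\comp(f)$ and $\iota$ preserves $T_\kappa^{[\alpha]}$.

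For the straightening maps, observe that every point of $T_\kappa^{[\alpha]}$ either lies on $L_0$ or hangs off it at the unique point $c_{\tau_f}$, so $T_\kappa^{[\alpha]}$ is the disjoint union of $L_0$ and the ``transverse pieces'' $\{f:\tau_f=s,\ f\neq c_s\}$, $s\in\mathbb{R}$. Call $\eta\colon\mathbb{R}\to C_\kappa$ \emph{admissible} if it is eventually $0$ on the left, piecewise constant from the right, and $\comp(\eta|_{(-\infty,s)})\leq\alpha$ for all $s$; e.g.\ any $f\in T_\kappa^{[\alpha]}$ extended by $0$ past $\rho_f$ is admissible. For admissible $\eta$ the points $\eta|_{(-\infty,s)}$ form a bi-infinite geodesic $M_\eta\subseteq T_\kappa^{[\alpha]}$ with an analogous transverse decomposition, and I claim there is an isometry $\theta_\eta$ of $T_\kappa^{[\alpha]}$ with $\theta_\eta(c_s)=\eta|_{(-\infty,s)}$ for all $s$: on the transverse piece at $c_s$ put $\theta_\eta(f)=f'$ where $f'|_{(-\infty,s)}=\eta|_{(-\infty,s)}$ and $f'(t)=\sigma_s(f(t))$ for $t\geq s$, with $\sigma_s$ any permutation of $C_\kappa$ sending $0$ to $\eta(s)$ (one exists since $|C_\kappa|\geq2$); this is a distance-preserving bijection onto the transverse piece of $M_\eta$ at $\eta|_{(-\infty,s)}$, and the pieces glue to a global isometry because points in different pieces are joined by a geodesic passing through $L_0$, whose image is the matching geodesic through $M_\eta$. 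Now, given $a_1,a_2$ with $r=d(a_1,a_2)$: with $\eta$ equal to $a_1$ extended by $0$ one has $\theta_\eta(c_{\rho_{a_1}})=a_1$, so $\phi_{-\rho_{a_1}}\circ\theta_\eta^{-1}$ sends $a_1$ to $c_0$; applying $\iota$ if necessary (it fixes $c_0$) the image of $a_2$ then satisfies $c_0\preceq a_2$ and $\rho_{a_2}=r$; finally $\theta_{\eta'}$ with $\eta'$ equal to $a_2$ extended by $0$ fixes $c_0$ (as $\eta'|_{(-\infty,0)}=c_0$) and sends $c_r$ to $a_2$, so $\theta_{\eta'}^{-1}$ carries $(a_1,a_2)$ the rest of the way to $(c_0,c_r)$.

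The step I expect to be the main obstacle is justifying that the transverse pieces of $L_0$ and of $M_\eta$ really are isometric \emph{inside} $T_\kappa^{[\alpha]}$, i.e.\ that $f\mapsto f'$ preserves $T_\kappa^{[\alpha]}$ in both directions. The apparent difficulty is that $\eta|_{(-\infty,s)}$ may already carry positive complexity, leaving a smaller ``complexity budget'' for functions extending it than $c_s$ leaves. This dissolves on noting that for countable closed $A,B\subseteq\mathbb{R}$ one has $(A\cup B)^{(k)}=A^{(k)}\cup B^{(k)}$, hence $\rkCB(A\cup B)\leq\max(\rkCB A,\rkCB B)$: since $P_{f'}$ is contained in $P_{\eta|_{(-\infty,s)}}$ together with the part of $P_f$ lying at or above $s$ (plus the single point $s$), and $P_{\eta|_{(-\infty,s)}}$ already has rank $\leq\alpha$, the only surviving constraint on $f'$ is that the part of $P_f$ above $s$ have rank $\leq\alpha$ — exactly the condition that put $f$ in $T_\kappa^{[\alpha]}$. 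Thus the complexity spent along $\eta|_{(-\infty,s)}$ imposes no further restriction, and the two transverse pieces coincide up to relabelling; the same bound (applied with $\eta\equiv0$) handles the inverse map.
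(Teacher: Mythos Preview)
Your proof is correct and follows essentially the same strategy as the paper: reduce any pair $(a_1,a_2)$ to $(c_0,c_r)$ by combining a ``straightening'' isometry taking a given point onto $L_0$ with translations and a reflection along $L_0$. The only difference is cosmetic---the paper's straightening map (Claim~1) is an involution swapping the branch of $a$ with that of $c_{\rho_a}$ at the single point $c_{\tau_a}$, whereas your $\theta_\eta$ relabels every transverse piece along $L_0$ simultaneously---but both constructions rest on the same Cantor--Bendixson union bound you isolate in your last paragraph.
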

\begin{proof}
    \begin{claim} \label{claim 1}
        Given $a \in T_\kappa^{[\alpha]}$, there is an isometry $\varphi \in \Isom(T_\kappa^{[\alpha]})$ such that $\varphi(a) = c_{\rho_a}$ and $\varphi(c_r) = c_r$ for all $r \leq \tau_a$.
    \end{claim}
    \begin{proof}
    \renewcommand{\qedsymbol}{$\blacksquare$}
        If $a = c_{\rho_a}$ then the claim holds with $\varphi = \id$, so suppose that $a \neq c_{\rho_a}$. Define $\varphi: T_\kappa^{[\alpha]} \rightarrow T_\kappa^{[\alpha]}$ as follows. If $b \in T_\kappa^{[\alpha]}$ is such that $b \wedge a \wedge c_{\rho_a} \preceq c_{\tau_a}$, then $\varphi(b) \coloneqq b$. Otherwise, either:
        \begin{itemize}
            \item[i.] $b \wedge a \succ c_{\tau_a}$ and $b \wedge c_{\rho_a} = c_{\tau_a}$, or
            \item[ii.] $b \wedge a = c_{\tau_a}$ and $b \wedge c_{\rho_a} \succ c_{\tau_a}$.
        \end{itemize}
        Let $\sigma \coloneqq \rho_{b \wedge a}$ if i holds, and $\sigma \coloneqq \rho_{b \wedge c_{\rho_a}}$ if ii holds.
        Let $\varphi(b)(t) \coloneqq  b(t)$ if $t < \tau_a$ or $t \geq \sigma$. If $\tau_a \leq t < \sigma$ and i holds, then let $\varphi(b)(t) \coloneqq 0$. If $\tau_a \leq t < \sigma$ and ii holds, then let $\varphi(b)(t) \coloneqq a(t)$. It is straightforward to check that $\varphi(b) \in T_\kappa^{[\alpha]}$.
        Then in particular $\varphi(a) = c_{\rho_a}$ and $\varphi(c_{\rho_a}) = a$.
        
        Let us show that $\varphi$ is an isometry. First observe that $\varphi$ is an involution, so in particular it is a bijection. Let $b, d \in T_\kappa^{[\alpha]}$. Then $\rho_{\varphi(b)} = \rho_b$ and, if $b \preceq d$, then $\varphi(b) \preceq \varphi(d)$. It follows that $\varphi(b \wedge d) = \varphi(b) \wedge \varphi(d)$, so $\rho_{\varphi(b) \wedge \varphi(d)} = \rho_{b \wedge d}$.
        Thus
        \[
            d(\varphi(b), \varphi(d)) 
            = \rho_{\varphi(b)} + \rho_{\varphi(d)} - 2\rho_{\varphi(b) \wedge \varphi(d)} 
            = \rho_b + \rho_d - 2\rho_{b \wedge d}
            = d(b,d).
            \qedhere
        \]
    \end{proof}
    
    \begin{claim} \label{Claim 2}
        The homomorphism $G^\alpha_{L_0} \rightarrow \Isom(L_0)$ induced from the action of $\Isom(T_\kappa^{[\alpha]})$ is surjective.
    \end{claim}
    \begin{proof}
    \renewcommand{\qedsymbol}{$\blacksquare$}
        Let $r > 0$ and define $\varphi_r \in \Isom(T_\kappa^{[\alpha]})$ as follows. Given $a \in T_\kappa^{[\alpha]}$, let $\varphi(a):(-\infty, \rho_a + r) \rightarrow C_\kappa$ be defined by:
        \[ 
        	\varphi(a)(t) \coloneqq
        	\begin{cases}
        		0 \quad &\text{if } t < \tau_a + r; \\
        		a(t-r) &\text{otherwise.}
        	\end{cases}
        \]
        Then $\varphi_r \in G^\alpha_{L_0}$ and $\varphi_r$ acts as a translation of length $r$ on $L_0$.
        To define an element $\varphi \in G^\alpha_{L_0}$ which acts as a reflection on $L_0$, let $\varphi(a): (-\infty, \rho_a - 2\tau_a) \rightarrow C_\kappa$  be the map defined by $\varphi(a)(t) = 0$ if $t < -\tau_a$ and $\phi(a)(t) = a(t - 2\tau_a)$ otherwise, for all $a \in T_\kappa^{[\alpha]}$.
    \end{proof}

    Let $r \coloneqq d(a_1, a_2)$. By Claim~\ref{claim 1}, there exists $\varphi_1 \in \Isom(T_\kappa^{[\alpha]})$ such that $\varphi_1(a_1) = c_{\rho_{a_1}}$ and, by Claim~\ref{Claim 2}, there exists $\varphi_2 \in \Isom(T_\kappa^{[\alpha]})$ such that $\varphi_2(c_{\rho_{a_1}}) = c_0$ and $c_0 \preceq \varphi_2 \circ \varphi_1(a_2)$. By Claim~\ref{claim 1} again, there exists $\varphi_3 \in \Isom(T_\kappa^{[\alpha]})$ such that $\varphi_3(c_0) = c_0$ and $\varphi_3 \circ \varphi_2 \circ \varphi_1 (a_2) = c_r$. Let $\varphi \coloneqq \varphi_3 \circ \varphi_2 \circ \varphi_1$, so $\varphi(a_1) = c_0$ and $\varphi(a_2) = c_r$. By a similar argument, there exists $\psi \in \Isom(T_\kappa^{[\alpha]})$ such that $\psi(b_1) = c_0$ and $\psi(b_2) = c_r$. Then $\psi^{-1} \circ \varphi$ is the required isometry.
\end{proof}

\begin{lemma} \label{lem: very highly transitive}
	Let $\alpha$ be a countable ordinal. Then the following hold:
	\begin{enumerate}
		\item Let $x \in T_\kappa^{[\alpha]}$, let $I$ be a set of cardinality $\kappa$ and let $\{D_i : i \in I\}$ be the set of directions of $T_\kappa^{[\alpha]}$ at $x$. For any bijection $\sigma: I \rightarrow I$, there is an isometry $\psi \in \Isom(T_\kappa^{[\alpha]})$ such that $\psi(x) = x$ and $\psi(D_i) = D_{\sigma(i)}$ for each $i \in I$.
		\item Let $S \subseteq T_\kappa^{[\alpha]}$ be a closed subtree such that every point of $S$ has finite valence. For any isometry $\theta \in \Isom(S)$, there exists $\psi \in \Isom(T_\kappa^{[\alpha]})$ such that $\psi|_{S} = \theta$.
	\end{enumerate}
\end{lemma}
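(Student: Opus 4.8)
The plan is to prove the two parts in turn, deducing (2) from (1). Both reduce, via the homogeneity of $T := T_\kappa^{[\alpha]}$ from Lemma~\ref{lem: 2-transitive}, to the local structure of $T$ at the single point $c_0$; throughout I assume $\alpha \geq 1$, so that $T$ has valence $\kappa$ by Lemma~\ref{lem: incomplete r-tree} (for $\alpha = 0$ part (1) is vacuous, since $T_\kappa^{[0]} = L_0$ has valence $2 \neq \kappa$, and part (2) is immediate because $L_0 \cong \mathbb{R}$).

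For (1), after conjugating by an isometry sending $x$ to $c_0$ I may assume $x = c_0$; the directions of $T$ at $c_0$ are then $D_\downarrow := \{g \in T : g \wedge c_0 \prec c_0\}$ together with $D_c := \{g \in T : c_0 \prec g,\ g(0) = c\}$ for $c \in C_\kappa$, a total of $|C_\kappa| + 1 = \kappa$ directions. I will exhibit three families of isometries of $T$ fixing $c_0$ whose induced permutations of $\{D_\downarrow\} \cup \{D_c : c \in C_\kappa\}$ generate the full symmetric group. First, the relabellings $R_\pi \colon g \mapsto \pi \circ g$ for $\pi \in \mathrm{Sym}(C_\kappa)$ with $\pi(0) = 0$: these lie in $\Isom(T)$, fix $c_0$, $D_\downarrow$ and $D_0$, and realise on $\{D_c : c \neq 0\}$ the \emph{full} symmetric group $\mathrm{Sym}(C_\kappa \setminus \{0\})$. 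Second, the reflection of $L_0$ about $c_0$, which extends to $\Isom(T)$ by the construction in the proof of Lemma~\ref{lem: 2-transitive}; from the explicit formula (it carries the breakpoint $\tau_g$ of each $g$ to $-\tau_g$) one reads off that it transposes $D_\downarrow$ and $D_0$ and fixes every $D_c$ with $c \neq 0$ pointwise. Third, the involution $\chi$ given by $\chi(g) = g$ when $c_0 \nprec g$ and, when $c_0 \prec g$, by transposing the labels $0$ and $1$ throughout $g|_{[0,\rho_g)}$ while leaving $g|_{(-\infty,0)} \equiv 0$ intact; I will check $\chi \in \Isom(T)$ — the one non-formal point being that $P_{\chi(g)}$ differs from $P_g$ by at most inserting or deleting the isolated point $0$, so (using $\alpha \geq 1$) the complexity of $\chi(g)$ remains $\leq \alpha$ — and that $\chi$ transposes $D_0$ and $D_1$ while preserving each remaining direction. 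A short group-theoretic argument then shows the $R_\pi$ together with $\chi$ realise all of $\mathrm{Sym}(\{D_c : c \in C_\kappa\})$ and, adjoining the reflection, every permutation of the $\kappa$ directions; here it is essential that the $R_\pi$ already supply \emph{non-finitary} permutations, since transpositions alone cannot generate an infinite symmetric group. Conjugating back handles a general $x$.

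For (2), the plan is to fibre $T$ over $S$. I first note that the nearest-point projection $p \colon T \to S$ is well defined even though $T$ is incomplete: for $x \in T$ and a distance-minimising sequence $(s_n)$ in the convex set $S$, the points at distance $\delta := d(x, S)$ from $x$ along $[x, s_n]$ all coincide in one point $\xi$ (the median of $x$ with any two $s_n$ lies in $S$, hence at distance $\geq \delta$ from $x$); this $\xi$ lies on a geodesic from $x$ to a point of $S$, hence in the convex set $T$, hence — $S$ being closed in $T$ — in $S$, and it is the projection. Writing $F_s := p^{-1}(s)$, elementary real-tree geometry yields: $T = \bigsqcup_{s \in S} F_s$; each $F_s$ is a closed subtree equal to $s$ together with those directions of $T$ at $s$ containing no point of $S$; and $d(x, x') = d(x, s) + d(s, s') + d(s', x')$ whenever $x \in F_s$, $x' \in F_{s'}$ with $s \neq s'$. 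This last identity says exactly that any family of bijective isometries $\eta_s \colon F_s \to F_{\theta(s)}$ with $\eta_s(s) = \theta(s)$ glues to some $\psi \in \Isom(T)$ with $\psi|_S = \theta$ (surjectivity is automatic, as $\{F_t\}$ and $\{F_{\theta(t)}\}$ both partition $T$). To produce the $\eta_s$, observe that part (1) with Lemma~\ref{lem: 2-transitive} shows any two directions of $T$, each cut at its root, are isometric rel root; hence $F_s$ is isometric rel root to a wedge of $\lambda_s$ copies of this common branch, where $\lambda_s$ is the number of directions of $T$ at $s$ disjoint from $S$. As $\theta \in \Isom(S)$ we have $\mathrm{val}_S(s) = \mathrm{val}_S(\theta(s))$, and — this is where finiteness enters — since this common value is finite while $\mathrm{val}_T(s) = \mathrm{val}_T(\theta(s)) = \kappa$, we get $\lambda_s = \lambda_{\theta(s)}$ (namely $\kappa$ if $\kappa$ is infinite, and $\kappa - \mathrm{val}_S(s)$ if $\kappa$ is finite). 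So $F_s$ and $F_{\theta(s)}$ are isometric rel root, and pairing up their branches — using part (1) once more to carry one direction of $T$ onto another while fixing the root — supplies $\eta_s$.

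The routine-but-fiddly step in (1) is verifying that $\chi$ respects the complexity bound, and the conceptual subtlety there is that transpositions do not suffice for infinite $\kappa$, which is why the relabellings are needed. In (2) the heart of the proof is the fibration $T = \bigsqcup_s F_s$ together with the observation that the finiteness hypothesis on the valences of $S$ is precisely what forces $\lambda_s = \lambda_{\theta(s)}$, so that the fibres over $s$ and $\theta(s)$ are abstractly isometric — this is exactly the step that would break down for a closed subtree having a point of infinite valence.
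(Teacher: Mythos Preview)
Your proof is correct. For Item~2 you follow essentially the same line as the paper: partition $T \setminus S$ into the directions at points of $S$ that miss $S$, observe that the finite-valence hypothesis forces the number of such directions at $s$ and at $\theta(s)$ to agree, and then glue branch isometries over $\theta$. (Your extra care in checking that the nearest-point projection onto $S$ exists despite $T$ being incomplete is a point the paper passes over.) For Item~1 you take a genuinely different route. The paper argues directly from the $2$-transitivity already established in Lemma~\ref{lem: 2-transitive}: choosing $x_i \in D_i$ and $x_j \in D_j$ at equal distance from $x$, the isometry carrying $(x,x_i)$ to $(x,x_j)$ shows that all directions at $x$ are isometric rel root, and then one simply glues the based direction-isometries $f_{i,\sigma(i)}$ to realise an arbitrary $\sigma$. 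You instead exhibit explicit stabiliser elements --- the relabellings $R_\pi$, the reflection of $L_0$, and the involution $\chi$ --- whose induced permutations of the directions generate the full symmetric group, correctly noting that the $R_\pi$ are needed to supply non-finitary permutations when $\kappa$ is infinite. Your approach is more concrete and makes visible exactly which isometries do the work, but it is longer and requires the separate complexity check for $\chi$; the paper's argument is shorter, reuses $2$-transitivity wholesale, and avoids any new construction.
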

\begin{proof}
	We start by proving Item 1. Let $i,j \in I$ be distinct elements and let $x_i \in D_i, x_j \in D_j$ be such that $d(x,x_i) = d(x,x_j) = 1$. By Lemma~\ref{lem: 2-transitive}, there exists $\psi \in \Isom(T_\kappa^{[\alpha]})$ such that $\psi(x_i) = x_j$ and $\psi(x) = x$. Then $\psi(D_i) = D_j$, so $D_i$ and $D_j$ are isometric. For all $i,j \in I$, let $f_{i,j}: D_i \rightarrow D_j$ be an isometry. Now, if $\sigma: I \rightarrow I$ is a bijection, define $\psi: T_\kappa^{[\alpha]} \rightarrow T_\kappa^{[\alpha]}$ as follows. Let $\psi(x) \coloneqq x$ and, for all $i \in I$, let $\psi|_{D_i} \coloneqq f_{i,\sigma(i)}$. Then $\psi$ is the required isometry.
	
	Let us now prove Item 2. For each $x \in S$, let $\mathcal{D}_x$ be the set of directions of $T_\kappa^{[\alpha]}$ at $x$ which do not intersect $S$. The valence of $S$ at $\theta(x)$ is finite and equal to the valence of $S$ at $x$, and the valence of $T_\kappa^{[\alpha]}$ is $\kappa$ at every point of $T_\kappa^{[\alpha]}$. Thus there exists a bijection $\sigma_x : \mathcal{D}_x \rightarrow \mathcal{D}_{\theta(x)}$. We have shown that all directions at a point of $T_\kappa^{[\alpha]}$ are isometric. Since $T_\kappa^{[\alpha]}$ is homogeneous, it follows that there exists an isometry $f_D: D \rightarrow \sigma_x(D)$ for all $D \in \mathcal{D}_x$. Since $S$ is a closed subtree of $T_\kappa^{[\alpha]}$, every point in $T_\kappa^{[\alpha]} - S$ belongs to a direction $D$, where $D \in \mathcal{D}_x$ for some $x \in S$.
	We can therefore define $\psi:T_\kappa^{[\alpha]} \rightarrow T_\kappa^{[\alpha]}$ as follows. Let $\psi|_{S} \coloneqq \theta$ and, for each $x \in S$ and $D \in \mathcal{D}_x$, let $\psi|_{D} \coloneqq f_D$. Then $\psi$ is the required isometry.
\end{proof}

\section{Distinct isometry classes}

Let $\kappa \geq 3$ be a cardinal and let $\alpha \geq 1$ be a countable ordinal. 

\begin{definition}
    Let $a, b \in T_\kappa$ be such that $a \prec b$. The \textit{complexity of the pair $(a,b)$} is
    \[
        \comp(a,b) \coloneqq \rkCB(P_b \cap [\rho_a, \rho_b]).
    \]
\end{definition}

\begin{lemma} \label{lem: complexity of the limit}
    Let $(\beta_n)_{n \in \mathbb{N}}$ be a monotone increasing (possibly constant) sequence of countable successor ordinals such that $\beta_1 \geq 1$. Let $\beta \coloneqq \sup_{n \in \mathbb{N}} \beta_n$. Suppose $(a_n)_{n \in \mathbb{N}} \subseteq T_\kappa$ is a sequence such that $a_n \preceq a_{n+1}$ and $\comp(a_n, a_{n+1}) = \beta_n$ for each $n$, and there exists $a \in T_\kappa$ such that $(a_n)_{n \in \mathbb{N}}$ converges to $a$. Then $\comp(a_1,a) = \beta + 1$. Moreover $\comp(a) = \max\{\comp(a_1), \beta + 1\}$.
\end{lemma}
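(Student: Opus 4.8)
The plan is to translate the whole statement into facts about the closed subsets $P_{a_n},P_a \subseteq \mathbb{R}$, using three elementary properties of Cantor--Bendixson derivatives, each proved by routine transfinite induction: (i) for closed $F_1,F_2$ one has $(F_1 \cup F_2)^{(\gamma)} = F_1^{(\gamma)} \cup F_2^{(\gamma)}$ for every $\gamma$, hence $\rkCB(F_1 \cup F_2) = \max\{\rkCB(F_1),\rkCB(F_2)\}$; (ii) the derivatives are local, i.e.\ if $U$ is open and $A \cap U = A' \cap U$ then $A^{(\gamma)} \cap U = A'^{(\gamma)} \cap U$ for all $\gamma$; and (iii) adjoining one isolated point to a set does not change its Cantor--Bendixson rank, once that rank is at least $1$.

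First I would fix the geometry. As $a_n \prec a_{n+1}$ (this is forced, since $\comp(a_n,a_{n+1})$ is defined), the numbers $\rho_n \coloneqq \rho_{a_n}$ strictly increase, and $d(a_n,a_m) = \rho_m - \rho_n$ for $n \leq m$; thus $(\rho_n)$ is Cauchy, hence bounded with finite supremum $L$, and the union of the chain $(a_n)$ is a well-defined element $\hat a \in T_\kappa$ with $d(a_n,\hat a) = L - \rho_n \to 0$. Uniqueness of limits gives $a = \hat a$, so $a_n \preceq a$ for all $n$ and $\rho_a = L = \sup_n \rho_n$. Set $B \coloneqq \{t < \rho_a : a|_{[t-\varepsilon,t]} \text{ is not constant for any } \varepsilon > 0\}$, so that $P_a = \overline B$, and since $a_n = a|_{(-\infty,\rho_n)}$ also $P_{a_n} = \overline{B \cap (-\infty,\rho_n)}$. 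Write $J_n \coloneqq [\rho_n,\rho_{n+1}]$. On $[\rho_n,\rho_{n+1})$ the sets $\overline B$ and $\overline{B \cap (-\infty,\rho_{n+1})}$ coincide, so $P_a \cap J_n$ is either $P_{a_{n+1}} \cap J_n$ or $(P_{a_{n+1}} \cap J_n) \cup \{\rho_{n+1}\}$; in the latter case $\rho_{n+1}$ is a right- but not a left-limit of $B$, so no point of $P_a$ lies immediately to its left and $\rho_{n+1}$ is isolated in $P_a \cap J_n$. Since $\rkCB(P_{a_{n+1}} \cap J_n) = \comp(a_n,a_{n+1}) = \beta_n \geq 1$, property (iii) yields $\rkCB(P_a \cap J_n) = \beta_n$ for every $n$.

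Now put $Q \coloneqq P_a \cap [\rho_1,\rho_a]$, so that $\comp(a_1,a) = \rkCB(Q)$, and the goal is $\rkCB(Q) = \beta+1$. Each $J_n$ meets $\overline B$ and the $J_n$ shrink to $\rho_a$, so $\rho_a \in \overline B = P_a$, i.e.\ $\rho_a \in Q$. For the lower bound I would prove by transfinite induction on $\gamma \leq \beta$ that $\rho_a \in Q^{(\gamma)}$: the case $\gamma = 0$ is the previous sentence, limit stages use $Q^{(\gamma)} = \bigcap_{\delta<\gamma}Q^{(\delta)}$, and at a successor $\gamma+1 \leq \beta$ monotonicity together with $\sup_n \beta_n = \beta$ gives $\beta_m > \gamma$ for all large $m$, whence $(P_a \cap J_m)^{(\gamma)} \neq \emptyset$; as $J_m$ is closed this set lies in $Q^{(\gamma)} \cap J_m$, and these non-empty sets accumulate at $\rho_a$, forcing $\rho_a \in Q^{(\gamma+1)}$. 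Thus $Q^{(\beta)} \neq \emptyset$. For the upper bound I would show $Q^{(\beta)} \subseteq \{\rho_a\}$: if $x \in Q$ and $x < \rho_a$, pick $M \geq 2$ with $x < \rho_M$ (possible as $\rho_1 \leq x < \sup_n \rho_n$); then $P_a \cap [\rho_1,\rho_M] = \bigcup_{n=1}^{M-1}(P_a \cap J_n)$ is a finite union of closed sets of ranks $\beta_1 \leq \dots \leq \beta_{M-1}$, so by (i) its rank is $\beta_{M-1} \leq \beta$ and its $\beta_{M-1}$-th derivative is empty; since $Q$ coincides with this set on an open neighbourhood of $x$, (ii) gives $x \notin Q^{(\beta_{M-1})} \supseteq Q^{(\beta)}$. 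Hence $Q^{(\beta)} = \{\rho_a\}$ and $\rkCB(Q) = \beta+1$, which is the first assertion.

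Finally, for the ``moreover'', write $P_a = F_1 \cup Q$ with $F_1 \coloneqq P_a \cap (-\infty,\rho_1]$, a closed set; exactly as in the second paragraph $F_1$ differs from $P_{a_1}$ by at most the isolated point $\rho_1$, so $\rkCB(F_1) = \comp(a_1)$ when $P_{a_1} \neq \emptyset$, and $\rkCB(F_1) \leq 1 \leq \beta+1$ when $P_{a_1} = \emptyset$ (in which case $\comp(a_1) = 0$). In both cases $\max\{\rkCB(F_1),\rkCB(Q)\} = \max\{\comp(a_1),\beta+1\}$, which by (i) equals $\rkCB(P_a) = \comp(a)$. The point requiring care throughout is the bookkeeping of which endpoints $\rho_n$ (resp.\ $\rho_1$) lie in which of $P_a,P_{a_n},P_{a_1}$, but in every instance the discrepancy is a single isolated point and hence, by (iii), irrelevant to the rank; the only substantive step is the transfinite induction of the third paragraph, where the limit case is immediate and only the successor case uses $\sup_n \beta_n = \beta$.
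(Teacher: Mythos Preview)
Your proof is correct and follows essentially the same strategy as the paper: both arguments establish that the $\beta$-th Cantor--Bendixson derivative of $P_a \cap [\rho_{a_1},\rho_a]$ is exactly $\{\rho_a\}$ by controlling the rank on each subinterval $[\rho_{a_n},\rho_{a_{n+1}}]$. Your version is somewhat more systematic---you isolate the general facts (i)--(iii) about Cantor--Bendixson derivatives and run a single transfinite induction, whereas the paper argues more ad hoc and splits into the cases $\beta$ successor versus $\beta$ limit (for the lower bound) and $\comp(a_1)\le\beta+1$ versus $\comp(a_1)>\beta+1$ (for the ``moreover''); you are also more careful than the paper about the endpoint bookkeeping at the $\rho_n$.
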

\begin{proof}
	Let $p \in P_a$ be such that $\rho_{a_1} \leq p < \rho_a$. Then $p \in [\rho_{a_n}, \rho_{a_{n+1}})$ for some $n \in \mathbb{N}$ and $P_a^{(\beta)} \cap [\rho_{a_n}, \rho_{a_{n+1}}) = P_{a_{n+1}}^{(\beta)} \cap [\rho_{a_n}, \rho_{a_{n+1}}) = \emptyset$, so $p \notin P_a^{(\beta)}$. On the other hand, if $\beta = \alpha +1$ for some $\alpha \geq 1$ then, for all sufficiently large $n$, there exists $p_n \in P_a^{(\alpha)} \cap [\rho_{a_n}, \rho_{a_{n+1}}]$ so $\rho_a = \lim_{n \rightarrow \infty} p_n \in P_a^{(\beta)}$. If $\beta$ is a limit ordinal then we can assume, up to passing to a subsequence, that $(\beta_n)_{n \in \mathbb{N}}$ is strictly increasing. Then, for all $n \geq 2$, there exists $p_n \in P_a^{(\beta_{n-1})} \cap [\rho_{a_n}, \rho_{a_{n+1}}]$, so $\rho_a = \lim_{n \rightarrow \infty} p_n \in \cap_{n \in \mathbb{N}} P_a^{(\beta_n)} = P_a^{(\beta)}$. Thus $P_a^{(\beta)} \cap [\rho_{a_1}, \rho_a] = \{\rho_a\}$ and $P_a^{(\beta + 1)} \cap [\rho_{a_1}, \rho_a] = \emptyset$.

    This shows that $\comp(a_1,a) = \beta + 1$. If $\comp(a_1) \leq \beta + 1$, then $P_a^{(\beta)} \cap (-\infty, \rho_{a_1})$ contains only isolated points. It follows that $P_a^{(\beta)}$ contains only isolated points, so $P_a^{(\beta+1)} = \emptyset$. Since $\comp(a_1, a) = \beta +1$, $P_a^{(\beta)} \neq \emptyset$, so $\comp(a) = \beta +1$. If $\comp(a_1) > \beta +1$ then, since $P_a^{(\alpha)} = P_{a_1}^{[\alpha]}$ for all $\alpha > \beta +1$, it follows that $\comp(a) = \comp(a_1)$.
\end{proof}

\begin{lemma} \label{lem: order preserving isom}
    Let $\varphi: \mathbb{R} \rightarrow T_\kappa^{[\alpha]}$ be a geodesic line. Then there exists an isometry $f \in \Isom(T_\kappa^{[\alpha]})$ such that $f(\varphi) = L_0$ and $f \circ \varphi((-\infty,0)) = \{c_r : r < 0\}$.
\end{lemma}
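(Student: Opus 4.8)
The plan is to reduce, via two‑point homogeneity, to the case where $\varphi$ parametrises a line that genuinely overlaps $L_0$ in a nondegenerate segment, and then to produce the required isometry as an automorphism of the subtree $\varphi(\mathbb{R})\cup L_0$, which we extend to all of $T_\kappa^{[\alpha]}$ by Lemma~\ref{lem: very highly transitive}(2).

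First, since $d(\varphi(0),\varphi(1))=1=d(c_0,c_1)$, Lemma~\ref{lem: 2-transitive} supplies $g\in\Isom(T_\kappa^{[\alpha]})$ with $g(\varphi(0))=c_0$ and $g(\varphi(1))=c_1$; replacing $\varphi$ by $g\circ\varphi$ — which is harmless, as composing the eventual isometry with $g$ recovers the general case — we may assume $\varphi(0)=c_0$ and $\varphi(1)=c_1$. Both $L_0$ and $\varphi(\mathbb{R})$ are closed and geodesically convex in $T_\kappa^{[\alpha]}$: the former because $c_r\mapsto r$ is an isometry onto $\mathbb{R}$, the latter because $\varphi$ is an isometric embedding and geodesics in an $\mathbb{R}$-tree are unique. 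Hence $K\coloneqq L_0\cap\varphi(\mathbb{R})$ is closed and convex and contains the segment $\varphi([0,1])=[c_0,c_1]$; invoking uniqueness of geodesics once more, $K=\{c_t:t\in J\}$ for some closed interval $J=[a,b]$ with $-\infty\le a\le 0<1\le b\le\infty$, and $\varphi(t)=c_t$ for every $t\in J$.

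Now set $S\coloneqq L_0\cup\varphi(\mathbb{R})$. Since $K\ne\emptyset$ and both pieces are convex, $S$ is a closed subtree of $T_\kappa^{[\alpha]}$, and a direct inspection shows that every point of $S$ has valence $2$ in $S$, except $c_a$ and $c_b$ (when these are finite), where exactly three directions meet: the $L_0$-direction, the $\varphi$-direction, and the direction into $K$. Define $\theta\colon S\to S$ to be the identity on $K$, to send $\varphi(t)\mapsto c_t$ for every $t\in\mathbb{R}\setminus J$, and to send $c_t\mapsto\varphi(t)$ for every $t\in\mathbb{R}\setminus J$; concretely, $\theta$ fixes $K$ pointwise and interchanges the two rays of $S$ issuing from each finite endpoint of $K$. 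One checks that $\theta$ is an isometric involution of $S$: a distance in $S$ depends only on the real parameters of its two endpoints and on which of the (at most four) rays at $c_a, c_b$, or the segment $K$, each endpoint lies in, and $\theta$ preserves the parameters while swapping, at $c_a$ and at $c_b$, the two ray-labels simultaneously. By construction $\theta\circ\varphi(t)=c_t$ for all $t\in\mathbb{R}$.

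Since $S$ is a closed subtree all of whose points have finite valence, Lemma~\ref{lem: very highly transitive}(2) yields $f\in\Isom(T_\kappa^{[\alpha]})$ with $f|_S=\theta$. Then $f(\varphi)=f(\varphi(\mathbb{R}))=\{c_t:t\in\mathbb{R}\}=L_0$, and for $t<0$ we have $\theta(\varphi(t))=c_t$ (if $t\in J$ because $\varphi(t)=c_t$ and $\theta$ fixes $K$, and otherwise by definition of $\theta$), so $f\circ\varphi((-\infty,0))=\{c_r:r<0\}$, as required. The only real work is the bookkeeping — confirming that $K$ is an honest subinterval of both lines, and that $\theta$ is a genuine isometry of $S$ with all valences finite — which is routine given convexity of the two lines and uniqueness of geodesics in an $\mathbb{R}$-tree, but must be stated carefully enough for Lemma~\ref{lem: very highly transitive}(2) to apply.
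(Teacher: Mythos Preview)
Your proof is correct and follows essentially the same strategy as the paper: build a closed finite-valence subtree containing both $L_0$ and the image of $\varphi$, exhibit an isometry of that subtree carrying $\varphi$ onto $L_0$, and extend via Lemma~\ref{lem: very highly transitive}(2). The only cosmetic difference is that you first apply Lemma~\ref{lem: 2-transitive} to force $\varphi$ and $L_0$ to overlap in a nondegenerate segment, which lets you write down the swap $\theta$ explicitly and get the orientation condition for free, whereas the paper works directly with the convex hull (valence $\le 4$) and fixes the orientation afterwards with a second appeal to Lemma~\ref{lem: very highly transitive}.
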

\begin{proof}
    Let $S \subseteq T_\kappa^{[\alpha]}$ be the convex hull of $L_0 \cup \varphi$. Then $S$ is a closed subtree whose points all have valence $\leq 4$, and there exists an isometry of $S$ which maps $\varphi$ onto $L_0$. By Lemma~\ref{lem: very highly transitive}, this implies that there is an isometry $f$ of $T_\kappa^{[\alpha]}$ which maps $\varphi$ onto $L_0$. Since $L_0$ is homogeneous, the same lemma implies that there is $g \in \Isom(T_\kappa^{[\alpha]})$ such that $g \circ f (\varphi) = L_0$ and $g \circ f \circ \varphi((-\infty,0)) = \{c_r : r < 0\}$.
\end{proof}

\begin{proposition} \label{lem: no isometry}
    Let $\kappa \geq 3$ be a cardinal and let $\alpha_1, \alpha_2 \geq 1$ be countable ordinals such that $\alpha_1 < \alpha_2$.
    \begin{itemize} 
    	\item The $\mathbb{R}$-tree $T_\kappa^{[\alpha_1]}$ is incomplete.
    	\item If $\psi: T_\kappa^{[\alpha_1]} \hookrightarrow T_\kappa^{[\alpha_2]}$ is an isometric embedding, then $\psi$ is not surjective.
    \end{itemize}
\end{proposition}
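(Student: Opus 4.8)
The plan is to build a Cauchy sequence in $T_\kappa^{[\alpha_1]}$ whose limit, formed in the complete tree $T_\kappa$, has complexity $\alpha_1+1$ and therefore lies outside $T_\kappa^{[\alpha_1]}$. First fix a nondecreasing sequence $\beta_1\le\beta_2\le\cdots$ of countable successor ordinals with $\beta_1\ge 1$ and $\sup_n\beta_n=\alpha_1$: constant equal to $\alpha_1$ when $\alpha_1$ is a successor, and strictly increasing to $\alpha_1$ through successors when $\alpha_1$ is a limit. For every countable successor ordinal $\gamma\ge 1$ and every $\ell>0$ there is a piecewise-constant-from-the-right map on $[0,\ell)$ whose set of non-left-constant points closes up to a countable compact well-ordered subset of $[0,\ell]$ of Cantor--Bendixson rank $\gamma$ (such subsets of $\mathbb{R}$ exist for every successor rank and are realised as jump sets). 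Starting from $a_1:=c_0$ and glueing on at stage $n$ such a block of length $2^{-n}$ realising rank $\beta_n$, one gets $a_n\prec a_{n+1}$ with $\comp(a_n,a_{n+1})=\beta_n$ and $\rho_{a_{n+1}}-\rho_{a_n}=2^{-n}$, so $(a_n)$ is Cauchy; since $P_{a_n}$ is a finite union of closed sets of rank $\le\alpha_1$ meeting pairwise in at most one point, $\comp(a_n)\le\alpha_1$, i.e.\ $a_n\in T_\kappa^{[\alpha_1]}$. The limit $f:=\lim_n a_n$ exists in $T_\kappa$, and Lemma~\ref{lem: complexity of the limit} gives $\comp(f)=\max\{\comp(a_1),\ \sup_n\beta_n+1\}=\alpha_1+1>\alpha_1$, so $f\notin T_\kappa^{[\alpha_1]}$. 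As limits are unique, $(a_n)$ has no limit in $T_\kappa^{[\alpha_1]}$, which is therefore incomplete.

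\noindent\textbf{Plan for the second bullet (non-surjectivity).} A surjective isometric embedding is an isometry, so it suffices to show $T_\kappa^{[\alpha_1]}\not\cong T_\kappa^{[\alpha_2]}$; suppose towards a contradiction that $\psi\colon T_\kappa^{[\alpha_1]}\to T_\kappa^{[\alpha_2]}$ is an isometry. First I would normalise $\psi$: the image $\psi(L_0)$ is a geodesic line in $T_\kappa^{[\alpha_2]}$, so by Lemma~\ref{lem: order preserving isom} I may post-compose with an isometry of $T_\kappa^{[\alpha_2]}$ to arrange $\psi(L_0)=L_0$, and then by the surjectivity of $G^{\alpha_2}_{L_0}\to\Isom(L_0)$ from the proof of Lemma~\ref{lem: 2-transitive} I may further arrange $\psi|_{L_0}=\id$. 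Since isometries extend to metric completions, $\psi$ induces an isometry $\overline{T_\kappa^{[\alpha_1]}}\to\overline{T_\kappa^{[\alpha_2]}}$. In $\overline{T_\kappa^{[\alpha]}}$ the points of $T_\kappa^{[\alpha]}$ retain valence $\kappa$, whereas each added point $z$ is an endpoint: a forward direction at $z$ would contain an element agreeing with $z$ on $(-\infty,\rho_z)$, hence having an $\alpha$-th Cantor--Bendixson level point of its $P$-set at the coordinate $\rho_z$, which lies strictly below its own length and contradicts membership in $\overline{T_\kappa^{[\alpha]}}$. Thus $T_\kappa^{[\alpha]}$ is exactly the set of non-endpoints of $\overline{T_\kappa^{[\alpha]}}$, so one is reduced to separating the completions $\overline{T_\kappa^{[\alpha_1]}}$ and $\overline{T_\kappa^{[\alpha_2]}}$ as metric spaces.

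\noindent\textbf{The invariant, and where the difficulty lies.} To separate the completions I would use Lemma~\ref{lem: complexity of the limit} in both directions: along every non-extendable geodesic ray of $T_\kappa^{[\alpha]}$ — equivalently, along the geodesic to any added endpoint of $\overline{T_\kappa^{[\alpha]}}$ — the complexities of the segments cut out by any exhausting increasing sequence have supremum exactly $\alpha$; and conversely, if an isometry carries such a ray to a ray whose limit lies inside the target tree, then (by Lemma~\ref{lem: complexity of the limit} applied there) that supremum is forced to drop below the target's complexity bound, and iterating one descends along a strictly decreasing sequence of ordinals. Packaging this into a transfinite ``completion rank'' invariant under the induced isometry would give $\mathrm{rk}(T_\kappa^{[\alpha]})=\alpha$, contradicting $\mathrm{rk}(T_\kappa^{[\alpha_1]})=\alpha_1<\alpha_2=\mathrm{rk}(T_\kappa^{[\alpha_2]})$. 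The main obstacle is precisely the rigorous construction of such an invariant: complexity itself is \emph{not} a metric invariant, since $\Isom(T_\kappa^{[\alpha]})$ is transitive on pairs of points at a fixed distance and, more strongly, any isometry between finite-valence closed subtrees extends to the whole tree (Lemma~\ref{lem: very highly transitive}); so the incompleteness pattern must be detected through the global (completed) structure rather than read off from any bounded configuration, and the delicate point is to verify that whatever global quantity one extracts is genuinely preserved by the isometry $\overline{T_\kappa^{[\alpha_1]}}\to\overline{T_\kappa^{[\alpha_2]}}$.
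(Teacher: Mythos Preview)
Your argument for incompleteness is correct and matches the paper: build a $\prec$-increasing Cauchy sequence $(a_n)\subseteq T_\kappa^{[\alpha_1]}$ with $\comp(a_n,a_{n+1})=\beta_n$, $\sup_n\beta_n=\alpha_1$, and invoke Lemma~\ref{lem: complexity of the limit}.

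For non-surjectivity there is a genuine gap, and you have in fact located it yourself. Passing to completions does no work: since $T_\kappa^{[\alpha]}$ is recoverable as the non-endpoint set of $\overline{T_\kappa^{[\alpha]}}$, distinguishing the completions is \emph{equivalent} to the original problem, not a reduction of it. Your ``completion rank'' is never actually defined, and the obstruction you name --- pairwise complexity is not a metric invariant, by $2$-transitivity --- is precisely why no naive invariant succeeds. The paper avoids invariants altogether. After normalising $\psi$ to fix $L_0$ pointwise (hence to be $\preceq$-preserving), it proves by transfinite induction on successor ordinals $1\le\beta\le\alpha_1$ the joint statement: for every $x\in T_\kappa^{[\alpha_1]}$ and $r>0$ there is $a\succ x$ with $\rho_a\le\rho_x+r$, $\comp(x,a)=\beta$, \emph{and} $\comp(\psi(x),\psi(a))\le\beta$. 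The base case $\beta=1$ carries the key idea: extend $x$ by a single constant block to $a'$ with $\comp(x,a')=1$; since $\psi(a')$ may have acquired many jump points above $\rho_{\psi(x)}$, \emph{shrink} $a'$ back to some $a\preceq a'$ so that $\rho_{\psi(a)}$ lies below the first such new jump, forcing $\comp(\psi(x),\psi(a))\le 1$. Stacking these controlled extensions and applying Lemma~\ref{lem: complexity of the limit} on both sides yields a Cauchy $(a_n)$ with $\lim a_n\notin T_\kappa^{[\alpha_1]}$ but $\comp(\lim\psi(a_n))\le\alpha_1+1\le\alpha_2$, so $\lim\psi(a_n)\in T_\kappa^{[\alpha_2]}\setminus\psi(T_\kappa^{[\alpha_1]})$. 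The ingredient your plan is missing is exactly this ``shrink to control the image'' step, which manufactures a single sequence witnessing incompleteness in the domain while converging inside the target.
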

\begin{proof}
	By Lemma~\ref{lem: order preserving isom}, we can (and do) assume that $\psi$ restricts to the identity on $L_0$. This implies that $\psi$ is order preserving.

    \setcounter{claim}{0}
    \begin{claim}
     \label{claim: induction hyp}
    	Let $\beta$ be a successor ordinal such that $1 \leq \beta \leq \alpha_1$, let $x \in T_\kappa^{[\alpha_1]}$ and let $y \coloneqq \psi(x)$. Then, for all $r > 0$, there exists $a \in T_\kappa^{[\alpha_1]}$ such that:
    	\begin{itemize}
    		\item[i.] $x \prec a$;
    		\item[ii.] $\rho_a \leq \rho_x + r$;
    		\item[iii.] $\comp(x,a) = \beta$;
    		\item[iv.] $\comp(y, \psi(a)) \leq \beta$. 
    	\end{itemize}
    \end{claim}
    \begin{proof}
    	\renewcommand{\qedsymbol}{$\blacksquare$}
    	We proceed by transfinite induction on $\beta$. Suppose $\beta = 1$ and let $c \in C_\kappa - \{0\}$. If there exists $\varepsilon > 0$ such that $x((\rho_{x} - \varepsilon, \rho_{x})) = \{0\}$ then let $a': (-\infty, \rho_{x} + 1) \rightarrow C_\kappa$ be defined by 
    	\[
    	a'(t) =
    	\begin{cases}
    		x(t) \quad &\text{if } t < \rho_{x};\\
    		c &\text{otherwise.}
    	\end{cases}
    	\]
    	Otherwise, define $a': (-\infty, \rho_{x} + 1) \rightarrow C_\kappa$ by
    	\[ 
    	a'(t) =
    	\begin{cases}
    		x(t) \quad &\text{if } t < \rho_{x}; \\
    		0 &\text{otherwise.}
    	\end{cases}
    	\]
    	In either case $P_{a'} = P_{x} \cup \{\rho_{x}\}$. Moreover, if $a \in T_\kappa$ is such that $x \prec a \preceq a'$, then $P_{a} \cap [\rho_{x}, \rho_{a}] =\{\rho_{x}\}$, so $\comp(x, a) = 1$.
    	Let $r' > 0$ be such that $r' \leq r$ and, if $P_{\psi(a')} \cap (\rho_{y}, \rho_{\psi(a')}) \neq \emptyset$, then $\rho_{y} + r' \leq \min P_{\psi(a')} \cap (\rho_{y}, \rho_{\psi(a')})$.
    	Let $b \coloneqq \psi(a')|_{(-\infty, \rho_{y} + r')}$. Then $P_b \subseteq P_{y} \cup \{\rho_{y}\}$ so $\comp(y,b) \leq 1$.
    	Since $\psi\big(T_\kappa^{[\alpha_1]}\big)$ is connected, there exists $a \in T_\kappa^{[\alpha_1]}$ be such that $b = \psi(a)$. Then $x \prec a \preceq a'$ so $\comp(x, a) = 1$ and $\rho_{a} = \rho_{x} + d(x, a) = \rho_{x} + r' \leq \rho_{x} + r$.
    	
    	Now suppose $\beta = \gamma + 1> 1$ and the claim holds for all successor ordinals $1 \leq \beta' < \beta$. If $\gamma$ is a successor ordinal then let $\gamma_n \coloneqq \gamma$ for all $n \in \mathbb{N}$. If $\gamma$ is a limit ordinal, then let $(\gamma_n)_{n \in \mathbb{N}}$ be a strictly increasing sequence of successor ordinals such that $\sup_{n \in \mathbb{N}} \gamma_n = \gamma$.  We next define sequences $(a_n)_{n \in \mathbb{N}} \subseteq T_\kappa^{[\alpha_1]}$ and $(b_n)_{n \in \mathbb{N}} \subseteq T_\kappa^{[\alpha_2]}$ recursively such that, for all $n \in \mathbb{N}$:
    	\begin{enumerate}
    		\item $\psi(a_n) = b_n$;
    		\item $x \preceq a_n \prec a_{n+1}$ and $y \preceq b_n \prec b_{n+1}$;
    		\item $\comp(a_n,a_{n+1}) = \gamma_{n}$ and $\comp(b_n,b_{n+1}) \leq \gamma_{n}$;
    		\item $\rho_{a_n} \leq \rho_{x} + \sum_{i=1}^n r/2^i$ and $\rho_{b_n} \leq \rho_{y} + \sum_{i=1}^n r/2^i$.
    	\end{enumerate}
    	Let $a_1 \coloneqq x$ and $b_1 \coloneqq y$. Items 1,2 and 4 are satisfied. Let $n \in \mathbb{N}$ and suppose that $a_1, \dots, a_n \in T_\kappa^{[\alpha_1]}$ and $b_1, \dots, b_n \in T_\kappa^{[\alpha_2]}$ have been defined. Applying the induction hypothesis to $(\gamma_{n}, a_n, r/2^{n+1})$, there exists $a_{n+1} \in T_\kappa^{[\alpha_1]}$ such that 
    	\begin{itemize} 
    		\item[i.] $a_n \prec a_{n+1}$;
    		\item[ii.] $\rho_{a_{n+1}} \leq \rho_{a_n} + r/2^{n+1} \leq \rho_{x_1} + \sum_{i=1}^{n+1} r/2^i$;
    		\item[iii.] $\comp(a_n, a_{n+1}) = \gamma_{n}$;
    		\item[iv.] if $b_{n+1} \coloneqq \psi(a_{n+1})$, then $\comp(b_n, b_{n+1}) \leq \gamma_{n}$.
    	\end{itemize}
    	Thus Items 1-3 are satisfied. Moreover 
    	\[
    	\rho_{b_{n+1}} = \rho_{b_n} + d(b_n, b_{n+1}) \leq \rho_{y} + \sum_{i=1}^n \frac{r}{2^i} + d(a_n, a_{n+1}) \leq \rho_{y} + \sum_{i=1}^{n+1} \frac{r}{2^i},
    	\]
    	so Item~4 holds.
    	
    	It follows from Items 2 and 4 that the sequences $(a_n)_{n \in \mathbb{N}}$ and $(b_n)_{n \in \mathbb{N}}$ are strictly increasing and Cauchy. Let $a \coloneqq \lim_{n \rightarrow \infty} a_n \in T_\kappa$, $b \coloneqq \lim_{n \rightarrow \infty} b_n \in T_\kappa$. By Lemma~\ref{lem: complexity of the limit} and Item 3, $\comp(x,a) = \gamma + 1 = \beta$, $\comp(a) \leq \alpha_1$, $\comp(y,b) \leq \gamma + 1 < \alpha_2$ and $\comp(b) \leq \alpha_2$. Since $\psi$ is continuous, $\psi(a) = b$.
    	Moreover $x \prec a$ and $\rho_{a} = \lim_{n \rightarrow \infty} \rho_{a_n} \leq \rho_{x} + r$.
    \end{proof}

    Let $(\beta_n)_{n \in \mathbb{N}}$ be a monotone increasing (possibly constant) sequence of successor ordinals such that $\beta_n \geq 1$ for each $n$ and $\sup_{n \in \mathbb{N}} \beta_n = \alpha_1$.
    It follows from Claim~\ref{claim: induction hyp} (using the same argument as in the construction of $(a_n)_{n \in \mathbb{N}}$ and $(b_n)_{n \in \mathbb{N}}$ in the proof of that claim) that there exist sequences $(a_n)_{n \in \mathbb{N}} \subseteq T_\kappa^{[\alpha_1]}$ and $(b_n)_{n \in \mathbb{N}} \subseteq T_\kappa^{[\alpha_2]}$ such that $a_1 = b_1 = c_0$ and, for each $n \in \mathbb{N}$:
    \begin{enumerate}
            \item $\psi(a_n) = b_n$;
            \item $a_n \prec a_{n+1}$ and $b_n \prec b_{n+1}$;
            \item $\comp(a_n,a_{n+1}) = \beta_{n}$ and $\comp(b_n, b_{n+1}) \leq \beta_{n}$;
            \item $\rho_{a_n} \leq \rho_{a_1} + \sum_{i=1}^n r/2^i$ and $\rho_{b_n} \leq \rho_{b_1} + \sum_{i=1}^n r/2^i$.
    \end{enumerate}
    Items 2 and 4 imply that $(a_n)_{n \in \mathbb{N}}$ and $(b_{n \in \mathbb{N}})_{n \in \mathbb{N}}$ are Cauchy and Lemma~\ref{lem: complexity of the limit} implies that the complexity of $a \coloneqq \lim_{n \in \mathbb{N}} a_n \in T_\kappa$ is $\alpha_1 + 1$ and the complexity of $b \coloneqq \lim_{n \rightarrow \infty} b_n \in T_\kappa$ is $\leq \alpha_1 + 1 \leq \alpha_2$. Therefore $a \notin T_\kappa^{[\alpha_1]}$, so $T_\kappa^{[\alpha_1]}$ is incomplete, and $b = \lim_{n \rightarrow \infty} \psi(a_n) \in T_\kappa^{[\alpha_2]} - \psi(T_\kappa^{[\alpha_1]})$, so $\psi$ is not surjective.
\end{proof}

\bibliographystyle{alpha}
\bibliography{Biblio}

\begin{thebibliography}{CRHK24}

\bibitem[Azu25]{AzuelosGuide25}
Pénélope Azuelos.
\newblock A guide to constructing free transitive actions on median spaces.
\newblock {\em Preprint: \href{https://arxiv.org/abs/2507.22230}{arXiv:2507.22230}}, 2025.

\bibitem[BDS11]{BehrstockMedian11}
Jason Behrstock, Cornelia Druţu, and Mark Sapir.
\newblock Median structures on asymptotic cones and homomorphisms into mapping class groups.
\newblock {\em Proc London Math Soc}, 102(3):503--554, March 2011.

\bibitem[Ber89]{BerestovskiuiQuasicones89}
V.~N. Berestovski\u{\i}.
\newblock Quasicones of {L}obachevskii spaces at infinity.
\newblock In {\em Proceedings of the {{International Conference}} on {{Algebra Dedicated}} to the {{Memory}} of {{A}}. {{I}}. {{Malcev}}}, page 112. Akad. Nauk SSSR Sibirsk. Otdel., Inst. Mat., Novosibirsk, 1989.

\bibitem[Ber19]{BerestovskiuiUryson19}
V.~N. Berestovski\u{\i}.
\newblock On {U}rysohn's $\mathbb{R}$-tree.
\newblock {\em Sibirsk. Mat. Zh.}, 60(1):14--27, 2019.

\bibitem[CM10]{ChiswellEmbedding10}
Ian Chiswell and Thomas Müller.
\newblock Embedding theorems for tree-free groups.
\newblock {\em Math. Proc. Cambridge Philos. Soc.}, 149(1):127--146, 2010.

\bibitem[CRHK24]{Casals-RuizReal24}
Montserrat Casals-Ruiz, Mark Hagen, and Ilya Kazachkov.
\newblock Real cubings and asymptotic cones of hierarchically hyperbolic groups, 2024.

\bibitem[DP01]{DyubinaExplicit01}
Anna Dyubina and Iosif Polterovich.
\newblock Explicit constructions of universal $\mathbb{R}$-trees and asymptotic geometry of hyperbolic spaces.
\newblock {\em Bull. Lond. Math. Soc.}, 33(6):727--734, November 2001.

\bibitem[DS08]{DrutuGroups08}
Cornelia Druţu and Mark~V. Sapir.
\newblock Groups acting on tree-graded spaces and splittings of relatively hyperbolic groups.
\newblock {\em Advances in Mathematics}, 217(3):1313--1367, February 2008.

\bibitem[Kec95]{KechrisClassical95}
Alexander~S. Kechris.
\newblock {\em Classical Descriptive Set Theory}, volume 156 of {\em Graduate {{Texts}} in {{Mathematics}}}.
\newblock Springer-Verlag New York, NY, 1995.

\bibitem[MNO92]{MayerUniversal92}
John~C. Mayer, Jacek Nikiel, and Lex~G. Oversteegen.
\newblock Universal spaces for {R}-trees.
\newblock {\em Trans. Amer. Math. Soc.}, 334(1):411--432, 1992.

\bibitem[Nik89]{NikielTopologies89}
Jacek Nikiel.
\newblock Topologies on pseudo-trees and applications.
\newblock {\em Mem. Amer. Math. Soc.}, 82(416):vi+116, 1989.

\bibitem[OS11]{OsinUNIVERSAL11}
Denis Osin and Mark Sapir.
\newblock Universal tree-graded spaces and asymptotic cones.
\newblock {\em International Journal of Algebra and Computation}, November 2011.

\bibitem[PS97]{PolterovichAsymptotic97}
I.~V. Polterovich and A.~I. Shnirel'man.
\newblock An asymptotic subcone of the {L}obachevskii plane as a space of functions.
\newblock {\em Russ. Math. Surv.}, 52(4):842, August 1997.

\bibitem[Sis13]{SistoTreegraded13}
Alessandro Sisto.
\newblock Tree-graded asymptotic cones.
\newblock {\em Groups Geom. Dyn.}, 7(3):697--735, August 2013.

\bibitem[Ury27]{UrysohnBeispiel27}
Paul Urysohn.
\newblock {Beispiel eines nirgends separablen metrischen Raumes}.
\newblock {\em Fundamenta Mathematicae}, 9(1):119--121, 1927.

\end{thebibliography}

\bigskip
{\footnotesize
  \noindent
  {\textsc{University of Bristol, School of Mathematics, Bristol, UK}} \par\nopagebreak
  \texttt{penelope.azuelos@bristol.ac.uk}

\end{document}